\def\getdelim#1#2#3#4\relax{"#4}
\newtheorem{thm}{Theorem}[section]
\newtheorem{lem}[thm]{Lemma}
\theoremstyle{definition}
\newtheorem{defn}[thm]{Definition}
\newtheorem{ex}[thm]{Example}
\theoremstyle{remark} \numberwithin{equation}{section}
\begin{document}
\title[]{Existence of Symmetric Positive Solutions for a Caputo Fractional Singular Boundary Value Problem}
\date{}
\author{Naseer Ahmad Asif}
\address{Department of Mathematics, School of Science, University of Management and Technology, C-II Johar Town, 54770 Lahore, Pakistan}%
\email{naseerasif@yahoo.com}%
\keywords{Positive solutions; symmetric solutions; singular BVPs; Caputo fractional}

\begin{abstract}
{In this article, we establish the symmetric positive existence for the following Caputo fractional boundary value problem
\begin{align*}
{}^{C}D_{0}^{\,\mu}x(t)+f(t,x(t))&=0,\hspace{0.4cm}t\in(-1,\,1),\hspace{0.4cm}1<\mu\leq2,\\
x(\pm1)=x'(0^{\pm})&=0,
\end{align*}
where ${}^{C}D_{0}^{\,\mu}x(t)={}^{C}D_{0^{+}}^{\,\mu}x(t)$ for $t\geq0$, ${}^{C}D_{0}^{\,\mu}x(t)={}^{C}D_{0^{-}}^{\,\mu}x(t)$ for $t\leq0$. Moreover, $f:(-1,\,1)\times(0,\infty)\rightarrow\mathbb{R}$ is continuous and singular at $t=-1$, $t=1$ and $x=0$. Here, ${}^{C}D_{0^{+}}^{\,\mu}$ and ${}^{C}D_{0^{-}}^{\,\mu}$, respectively, are Caputo fractional left and right derivatives of order $\mu$.}
\end{abstract}

\maketitle

\section{introduction}

In this article, we are concerned with the existence of symmetric positive solutions for the following Caputo fractional singular boundary value problem (SBVP)
\begin{equation}\label{cbvp}\begin{split}
{}^{C}D_{0}^{\,\mu}x(t)+f(t,x(t))&=0,\hspace{0.4cm}t\in(-1,\,1),\hspace{0.4cm}1<\mu\leq2,\\
x(\pm1)=x'(0^{\pm})&=0,
\end{split}\end{equation}
where ${}^{C}D_{0}^{\,\mu}x(t)={}^{C}D_{0^{+}}^{\,\mu}x(t)$ for $t\geq0$, ${}^{C}D_{0}^{\,\mu}x(t)={}^{C}D_{0^{-}}^{\,\mu}x(t)$ for $t\leq0$. Moreover, $f:(-1,\,1)\times(0,\infty)\rightarrow\mathbb{R}$ is continuous and singular at $t=-1$, $t=1$ and $x=0$. Here, ${}^{C}D_{0^{+}}^{\,\mu}$ and ${}^{C}D_{0^{-}}^{\,\mu}$, respectively, are Caputo fractional left and right derivatives of order $\mu$. We provide sufficient conditions for the existence of symmetric positive solutions of the Caputo fractional SBVP \eqref{cbvp}. By a symmetric positive solution $x$ of Caputo fractional SBVP \eqref{cbvp} we mean $x\in C[-1,\,1]$ satisfies \eqref{cbvp}, $x(t)=x(-t)$ for $t\in[-1,\,1]$ and $x(t)>0$ for $t\in(-1,\,1)$. For Caputo fractional SBVPs \eqref{cbvp}, two point boundary conditions (BCs) are $x(\pm1)=0$, whereas $x'(0^{\pm})=0$ are natural conditions followed by a function $x$ which is symmetric and concave on $[-1,\,1]$.

BVPs involving fractional order differentials have become an emerging area of recent research in science, engineering and mathematics, \cite{dhelm,kilbas,miller,podlubny}. Applying results of nonlinear functional analysis and fixed point theory, many articles have been devoted to the study of the existence of solutions for fractional differential equations under various type of BCs \cite{bailu,liluozhou,liangzhang,shizhang,suzhang,zhangwangsun}. Further, many authors considered the SBVPs involving fractional derivatives, for details see \cite{ars,lyons,mmz,stanek,szhx,yjx,zmw}. On the other side, many authors have established the existence of symmetric positive solutions for nonlinear BVPs of integer order derivatives of even order, for the references see \cite{averyhenderson,graefkong,jiangliuwu,kosmatov,ma}.

However, all the above works were obtained with fractional order left derivatives and symmetric solutions have been formulated for BVPs of even order derivatives only. Consequently, many aspects of the existence theory for the solutions of fractional order BVP needed to be explore, such as symmetric positive solutions. To the author's best knowledge, \eqref{cbvp} is a new form of fractional differential equation. Further, no paper in the existing literature has formulated the existence of symmetric positive solutions for BVPs containing fractional derivatives. An important features of this paper is that the Green's function of the Caputo fractional linear BVP corresponding to \eqref{cbvp} is symmetric.

The paper is organized as follows. In Section \ref{pre}, we recall some definitions from fractional calculus and some preliminary lemmas for construction of Green's function. Some properties properties of Green's function are also presented in the same section. In Section \ref{main}, by the use of Schauder's fixed point theorem and results of functional analysis, the existence of symmetric positive solutions is obtained in Theorem \ref{mainth}. Finally, an example is presented to illustrate our theory.

\section{preliminaries}\label{pre}

In this section, we shall state some necessary definitions and preliminary results. The following definitions and lemmas are known, \cite{kilbas,miller,podlubny}.

\begin{defn}
The Riemann-Liouville fractional left integral of order $\mu>0$ of a function $x:(0,\infty)\rightarrow\mathbb{R}$ is defined as
\begin{align*}
{}^{RL}I_{0^{+}}^{\,\mu}\,x(t)=\frac{1}{\Gamma(\mu)}\int_{0}^{t}(t-\tau)^{\mu-1}x(\tau)d\tau,\hspace{0.4cm}t\geq0.
\end{align*}
\end{defn}

\begin{defn}
The Riemann-Liouville fractional right integral of order $\mu>0$ of a function $x:(-\infty,0)\rightarrow\mathbb{R}$ is defined as
\begin{align*}
{}^{RL}I_{0^{-}}^{\,\mu}\,x(t)=\frac{1}{\Gamma(\mu)}\int_{t}^{0}(\tau-t)^{\mu-1}x(\tau)d\tau,\hspace{0.4cm}t\leq0.
\end{align*}
\end{defn}

\begin{defn}
The Caputo fractional left derivative of a function $x\in AC^{\,n}[0,\infty)$, $n\in\mathbb{N}$, of order $\mu\in(n-1,n]$ is defined as
\begin{align*}{}^{C}D_{0^{+}}^{\,\mu}x(t)=\frac{1}{\Gamma(n-\mu)}\int_{0}^{t}\frac{x^{(n)}(\tau)}{(t-\tau)^{\mu-n+1}}d\tau.\end{align*}
\end{defn}

\begin{defn}
The Caputo fractional right derivative of a function $x\in AC^{\,n}(-\infty,0]$, $n\in\mathbb{N}$, of order $\mu\in(n-1,n]$ is defined as
\begin{align*}{}^{C}D_{{0}^{-}}^{\,\mu}\,x(t)=\frac{(-1)^{n}}{\Gamma(n-\mu)}\int_{t}^{0}\frac{x^{(n)}(\tau)}{(\tau-t)^{\mu-n+1}}d\tau.\end{align*}
\end{defn}

\begin{lem}\label{rightsol}
For $\mu\in(n-1,n]$, $n\in\mathbb{N}$, the fractional differential equation ${}^{C}D_{0^{-}}^{\,\mu}\,x(t)+y(t)=0$, $t\leq0$, has a solution $x(t)=a_{1}+a_{2}\,t+a_{3}\,t^{2}+\cdots+a_{n}\,t^{n-1}\,-\,{}^{RL}I_{0^{-}}^{\,\mu}\,y(t)$, where $a_{i}\in\mathbb{R}$, $i=1,\cdots,n$.
\end{lem}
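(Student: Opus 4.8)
The plan is to verify directly that the proposed $x$ solves the equation, splitting it into its polynomial part $p(t)=a_{1}+a_{2}t+\cdots+a_{n}t^{n-1}$ and its particular part $-\,{}^{RL}I_{0^{-}}^{\,\mu}y(t)$. Since the equation is linear in the Caputo right derivative, it suffices to show that ${}^{C}D_{0^{-}}^{\,\mu}p(t)=0$ and that ${}^{C}D_{0^{-}}^{\,\mu}\big({}^{RL}I_{0^{-}}^{\,\mu}y\big)(t)=y(t)$; adding these gives ${}^{C}D_{0^{-}}^{\,\mu}x(t)=-y(t)$, which is the assertion. I would also note that the $n$ free constants $a_{1},\dots,a_{n}$ account for the $n$-dimensional kernel of ${}^{C}D_{0^{-}}^{\,\mu}$, so that the displayed formula is in fact the general solution.

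First I would dispose of the polynomial part. By the definition of the Caputo fractional right derivative, the operator of order $\mu\in(n-1,n]$ only sees the $n$-th ordinary derivative of its argument, and since $\deg p\leq n-1$ we have $p^{(n)}\equiv0$, whence ${}^{C}D_{0^{-}}^{\,\mu}p\equiv0$. For the particular part I would rewrite the Caputo right derivative in the factored form ${}^{C}D_{0^{-}}^{\,\mu}g=(-1)^{n}\,{}^{RL}I_{0^{-}}^{\,n-\mu}\big(g^{(n)}\big)$, which follows by matching the kernel $(\tau-t)^{\,n-\mu-1}$ in the two definitions. Applying this to $g={}^{RL}I_{0^{-}}^{\,\mu}y$ and using the elementary differentiation rule $\frac{d}{dt}\,{}^{RL}I_{0^{-}}^{\,\mu}y=-\,{}^{RL}I_{0^{-}}^{\,\mu-1}y$ iterated $n$ times, one obtains $\frac{d^{n}}{dt^{n}}\,{}^{RL}I_{0^{-}}^{\,\mu}y=(-1)^{n}\,{}^{RL}I_{0^{-}}^{\,\mu-n}y$. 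The two factors of $(-1)^{n}$ cancel, and the semigroup property ${}^{RL}I_{0^{-}}^{\,n-\mu}\,{}^{RL}I_{0^{-}}^{\,\mu-n}={}^{RL}I_{0^{-}}^{\,0}=\mathrm{Id}$ then yields ${}^{C}D_{0^{-}}^{\,\mu}\big({}^{RL}I_{0^{-}}^{\,\mu}y\big)=y$, as required.

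The main obstacle I anticipate is the careful bookkeeping of signs peculiar to the right-sided operators: the factor $(-1)^{n}$ in the Caputo right derivative, the sign in $\frac{d}{dt}\,{}^{RL}I_{0^{-}}^{\,\mu}y=-\,{}^{RL}I_{0^{-}}^{\,\mu-1}y$, and the correct form of the right-sided semigroup law all have to be reconciled so that the cancellations go through. To sidestep this, an alternative route is the reflection $s\mapsto-s$: setting $\tilde{x}(s)=x(-s)$ and $\tilde{y}(s)=y(-s)$ for $s\geq0$, a change of variable in the integrals shows ${}^{C}D_{0^{-}}^{\,\mu}x(-s)={}^{C}D_{0^{+}}^{\,\mu}\tilde{x}(s)$ and ${}^{RL}I_{0^{-}}^{\,\mu}y(-s)={}^{RL}I_{0^{+}}^{\,\mu}\tilde{y}(s)$, so the right-sided problem on $t\leq0$ is carried exactly onto the standard left-sided problem on $s\geq0$. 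Invoking the known left-sided solution formula from \cite{kilbas,miller,podlubny} and reflecting back then recovers the stated expression, with the polynomial basis $1,t,\dots,t^{n-1}$ preserved up to signs absorbed into the constants $a_{i}$.
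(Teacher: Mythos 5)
The paper does not prove this lemma at all: it is listed among the facts declared ``known'' and attributed to \cite{kilbas,miller,podlubny}, so there is no internal proof to compare against. Your verification strategy is the standard textbook argument and is sound in outline: the split into the polynomial kernel part and the particular part, the observation that ${}^{C}D_{0^{-}}^{\,\mu}$ annihilates polynomials of degree at most $n-1$ because it only sees the $n$-th derivative, and the identity ${}^{C}D_{0^{-}}^{\,\mu}\bigl({}^{RL}I_{0^{-}}^{\,\mu}y\bigr)=y$ together give exactly what is claimed (and since the lemma only asserts ``has a solution,'' verification is all that is required; your remark about the $n$-dimensional kernel is a bonus, not an obligation).

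One step deserves more care than your sketch gives it. The iterated rule $\frac{d^{n}}{dt^{n}}\,{}^{RL}I_{0^{-}}^{\,\mu}y=(-1)^{n}\,{}^{RL}I_{0^{-}}^{\,\mu-n}y$ is unproblematic only while the order of the integral stays positive; since $\mu\in(n-1,n]$, after $n-1$ differentiations you reach ${}^{RL}I_{0^{-}}^{\,\mu-n+1}y$ with $\mu-n+1\in(0,1]$, and the final differentiation cannot be pushed under the integral sign (the kernel $(\tau-t)^{\mu-n-1}$ is not integrable, and the boundary term at $\tau=t$ no longer vanishes). The object ${}^{RL}I_{0^{-}}^{\,\mu-n}y$ is really the Riemann--Liouville right derivative of order $n-\mu$, and the ``semigroup'' cancellation ${}^{RL}I_{0^{-}}^{\,n-\mu}\,{}^{RL}I_{0^{-}}^{\,\mu-n}=\mathrm{Id}$ is then a composition of an integral with a derivative, which holds for continuous $y$ but is not the ordinary semigroup law and needs a citation or a separate argument (e.g.\ the identity ${}^{RL}D^{\alpha}\,{}^{RL}I^{\alpha}=\mathrm{Id}$ plus the vanishing of ${}^{RL}I^{\mu}y$ and its first $n-1$ derivatives at $0$). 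Your alternative via the reflection $s\mapsto-s$ is the cleaner route here: it correctly carries ${}^{C}D_{0^{-}}^{\,\mu}$ onto ${}^{C}D_{0^{+}}^{\,\mu}$ and ${}^{RL}I_{0^{-}}^{\,\mu}$ onto ${}^{RL}I_{0^{+}}^{\,\mu}$, reduces the statement to the left-sided Lemma \ref{leftsol} that the paper already takes as known, and is particularly natural given that the entire paper is built on this left--right symmetry. I would make the reflection argument the primary proof and relegate the direct verification to a remark.
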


\begin{lem}\label{leftsol}
For $\mu\in(n-1,n]$, $n\in\mathbb{N}$, the fractional differential equation ${}^{C}D_{0^{+}}^{\,\mu}\,x(t)+y(t)=0$, $t\geq0$, has a solution $x(t)=b_{1}+b_{2}\,t+b_{3}\,t^{2}+\cdots+b_{n}\,t^{n-1}\,-\,{}^{RL}I_{0^{+}}^{\,\mu}\,y(t)$, where $b_{i}\in\mathbb{R}$, $i=1,\cdots,n$.
\end{lem}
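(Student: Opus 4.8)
The plan is to prove the lemma by direct verification: I will check that the displayed $x$ satisfies ${}^{C}D_{0^{+}}^{\,\mu}x(t)+y(t)=0$, and then indicate why this family (as the $b_{i}$ range over $\mathbb{R}$) exhausts all solutions. Write $x(t)=p(t)-{}^{RL}I_{0^{+}}^{\,\mu}\,y(t)$, where $p(t)=b_{1}+b_{2}\,t+\cdots+b_{n}\,t^{n-1}$ is an arbitrary polynomial of degree at most $n-1$. By linearity of the Caputo derivative it suffices to compute ${}^{C}D_{0^{+}}^{\,\mu}$ on each summand separately.

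For the polynomial part, I would use that the Caputo derivative of order $\mu\in(n-1,n]$ is built from the $n$-th classical derivative $x^{(n)}$, as in its defining integral. Since $\deg p\le n-1$ we have $p^{(n)}\equiv0$, whence ${}^{C}D_{0^{+}}^{\,\mu}p\equiv0$; this is precisely the statement that $1,t,\dots,t^{n-1}$ span the kernel of ${}^{C}D_{0^{+}}^{\,\mu}$, which explains why the polynomial carries the free constants $b_{i}$. For the integral part, the essential ingredient is the left-inverse relation ${}^{C}D_{0^{+}}^{\,\mu}\,{}^{RL}I_{0^{+}}^{\,\mu}\,y(t)=y(t)$, valid for continuous $y$. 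Combining the two computations gives ${}^{C}D_{0^{+}}^{\,\mu}x(t)=-y(t)$, which is the claimed equation, so the stated $x$ is indeed a solution.

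To see that no solutions are lost, I would apply ${}^{RL}I_{0^{+}}^{\,\mu}$ to both sides of the equation and invoke the companion composition identity ${}^{RL}I_{0^{+}}^{\,\mu}\,{}^{C}D_{0^{+}}^{\,\mu}x(t)=x(t)-\sum_{k=0}^{n-1}\frac{x^{(k)}(0)}{k!}\,t^{k}$; rearranging recovers the displayed formula with $b_{k+1}=x^{(k)}(0)/k!$, so the constants range freely over $\mathbb{R}$ as the initial data of $x$ vary. The main obstacle is establishing these two composition identities rigorously — the left-inverse property and the Taylor-polynomial correction term — which demand care with the relation between $\mu$ and $n$ and with the regularity $x\in AC^{\,n}[0,\infty)$ assumed in the Caputo definition; both are classical and recorded in \cite{kilbas,miller,podlubny}. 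Alternatively, since Lemma \ref{rightsol} has already been established for the right derivative, I would obtain the present statement by the reflection $t\mapsto-t$, which interchanges ${}^{RL}I_{0^{-}}^{\,\mu}$ with ${}^{RL}I_{0^{+}}^{\,\mu}$ and ${}^{C}D_{0^{-}}^{\,\mu}$ with ${}^{C}D_{0^{+}}^{\,\mu}$, transporting the general solution of Lemma \ref{rightsol} directly to the form claimed here.
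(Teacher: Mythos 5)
The paper offers no proof of this lemma (nor of its mirror image, Lemma \ref{rightsol}); both are stated as known facts with a pointer to \cite{kilbas,miller,podlubny}, so there is no in-paper argument to compare against. Your proof is the standard one from those references and is correct. The verification half --- ${}^{C}D_{0^{+}}^{\,\mu}$ annihilates polynomials of degree at most $n-1$ because its defining integral involves only the $n$-th derivative, together with the left-inverse identity ${}^{C}D_{0^{+}}^{\,\mu}\,{}^{RL}I_{0^{+}}^{\,\mu}\,y=y$ for continuous $y$ --- already settles the literal claim ``has a solution.'' The exhaustiveness half via ${}^{RL}I_{0^{+}}^{\,\mu}\,{}^{C}D_{0^{+}}^{\,\mu}x(t)=x(t)-\sum_{k=0}^{n-1}\frac{x^{(k)}(0)}{k!}t^{k}$ goes beyond the literal statement, but it is precisely what the paper uses tacitly in Lemma \ref{lemir}, where the general-solution form is needed so that the constants can be fixed by the boundary conditions; including it is therefore the right call. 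Two points you flag deserve emphasis rather than deferral: (i) to apply the Caputo derivative to ${}^{RL}I_{0^{+}}^{\,\mu}\,y$ at all one must check that this function lies in $AC^{\,n}$, which is where hypotheses on $y$ enter (the lemma as stated in the paper imposes none, and in the application $y$ is singular at the endpoints, so this is not entirely cosmetic); (ii) your reflection argument from Lemma \ref{rightsol} does work --- the factor $(-1)^{n}$ in the definition of ${}^{C}D_{0^{-}}^{\,\mu}$ is exactly what makes $t\mapsto-t$ conjugate the right derivative into the left one --- and is arguably the cleaner route given that the paper states the two lemmas as a pair.
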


The following lemmas are essential for Theorem \ref{mainth}. In Lemma \ref{lemir}, we construct the Green's function and formulate integral representation for a Caputo fractional linear BVP.

\begin{lem}\label{lemir}
Let $y\in C((-1,\,1),(0,\infty))$ satisfies $\int_{-1}^{\,1}(1-|t|)^{\mu-1}\,y(t)dt<\infty$, then the Caputo fractional linear BVP
\begin{equation}\label{lbvp}\begin{split}
{}^{C}D_{0}^{\,\mu}x(t)+y(t)&=0,\hspace{0.4cm}t\in(-1,\,1),\hspace{0.4cm}1<\mu\leq2,\\
x(\pm1)=x'(0^{\pm})&=0,
\end{split}\end{equation}
has integral representation
\begin{equation}\label{ir}
x(t)=\int_{-1}^{\,1}G(t,\tau)\,y(\tau)d\tau,\hspace{0.4cm}t\in[-1,\,1],
\end{equation}
where
\begin{equation}\label{gf}
G(t,\tau)=\frac{1}{\Gamma(\mu)}\begin{cases}
(1+\tau)^{\mu-1},\hspace{0.4cm}&-1\leq\tau\leq t\leq0,\\
(1+\tau)^{\mu-1}-(\tau-t)^{\mu-1},\hspace{0.4cm}&-1\leq t\leq\tau\leq0,\\
(1-\tau)^{\mu-1}-(t-\tau)^{\mu-1},\hspace{0.4cm}&\,\,\,\,\,0\leq\tau\leq t\leq 1,\\
(1-\tau)^{\mu-1},\hspace{0.4cm}&\,\,\,\,\,0\leq t\leq\tau\leq 1.
\end{cases}
\end{equation}
\end{lem}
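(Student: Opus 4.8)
The plan is to exploit the fact that the operator ${}^{C}D_{0}^{\,\mu}$ is defined piecewise about the base point $0$, so that the BVP \eqref{lbvp} decouples into two independent two-point problems: one on $[0,1]$ governed by the left derivative and one on $[-1,0]$ governed by the right derivative. I would solve these separately and then glue the pieces together at $t=0$.

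First I would treat $t\in[0,1]$. Since $1<\mu\le2$ we take $n=2$ in Lemma \ref{leftsol}, so every solution of ${}^{C}D_{0^{+}}^{\,\mu}x+y=0$ has the form $x(t)=b_{1}+b_{2}\,t-\frac{1}{\Gamma(\mu)}\int_{0}^{t}(t-\tau)^{\mu-1}y(\tau)\,d\tau$. Differentiating gives $x'(t)=b_{2}-\frac{1}{\Gamma(\mu-1)}\int_{0}^{t}(t-\tau)^{\mu-2}y(\tau)\,d\tau$, and because $\mu-2\in(-1,0]$ and $y$ is continuous (hence bounded) near $0$, the singular integral is $O(t^{\mu-1})\to0$ as $t\to0^{+}$; thus $x'(0^{+})=b_{2}$ and the condition $x'(0^{+})=0$ forces $b_{2}=0$. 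The remaining condition $x(1)=0$ yields $b_{1}=\frac{1}{\Gamma(\mu)}\int_{0}^{1}(1-\tau)^{\mu-1}y(\tau)\,d\tau$, which is finite by the hypothesis $\int_{-1}^{1}(1-|t|)^{\mu-1}y(t)\,dt<\infty$. Splitting $\int_{0}^{1}=\int_{0}^{t}+\int_{t}^{1}$ then recovers exactly the two $[0,1]$-branches of $G$ in \eqref{gf}. An entirely symmetric computation on $[-1,0]$, now with $n=2$ in Lemma \ref{rightsol} and using $\frac{d}{dt}{}^{RL}I_{0^{-}}^{\,\mu}y(t)=-\frac{1}{\Gamma(\mu-1)}\int_{t}^{0}(\tau-t)^{\mu-2}y(\tau)\,d\tau\to0$ as $t\to0^{-}$, gives $a_{2}=0$ and $a_{1}=\frac{1}{\Gamma(\mu)}\int_{-1}^{0}(1+\tau)^{\mu-1}y(\tau)\,d\tau$, producing the two $[-1,0]$-branches of $G$.

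Finally I would assemble the two pieces into the single formula \eqref{ir}, reading off that $G$ vanishes whenever $t$ and $\tau$ have opposite signs, so that the integral over $[-1,1]$ reduces to the appropriate half-interval in each case. The main obstacle I anticipate is not the algebra but the analytic control at the two singular places: (i) justifying the differentiation under the integral sign and the vanishing of the boundary term at $t=0$, which is exactly where the restriction $\mu>1$ is used; and (ii) verifying that the representation \eqref{ir} defines a function in $C[-1,1]$, in particular that the weak singularities of $y$ at $\pm1$ are absorbed by the weight $(1-|\tau|)^{\mu-1}$ in the hypothesis, and that the two branches agree at $t=0$, i.e. $x(0^{+})=x(0^{-})$. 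I would close by substituting \eqref{ir} back into \eqref{lbvp} to confirm, via Lemmas \ref{leftsol}--\ref{rightsol}, that the constructed $x$ indeed solves the BVP.
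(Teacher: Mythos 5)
Your proposal follows essentially the same route as the paper: decouple at $t=0$, invoke Lemmas \ref{rightsol} and \ref{leftsol} with $n=2$, use the boundary conditions to fix the constants $a_i,b_i$, and reassemble the two half-line formulas into \eqref{ir} with $G$ vanishing for $t,\tau$ of opposite signs. You are in fact more careful than the paper at exactly the points you flag: the paper does not justify that $x'(0^{\pm})=0$ forces $b_2=a_2=0$ (your $O(t^{\mu-1})$ estimate), nor does it address the matching $x(0^{+})=x(0^{-})$, which for a non-symmetric $y$ actually fails and is a genuine gap in the lemma as stated rather than in your argument.
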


\begin{proof}
Consider the Caputo fractional linear differential equation
\begin{equation}\label{exd}\begin{split}
{}^{C}D_{0}^{\,\mu}\,x(t)+y(t)=0,\hspace{0.4cm}t\in(-1,\,1),\hspace{0.4cm}1<\mu\leq2,
\end{split}\end{equation}
which implies that
\begin{align*}
{}^{C}D_{0^{-}}^{\,\mu}\,x(t)+y(t)&=0,\hspace{0.4cm}t\in[-1,0],\\
{}^{C}D_{0^{+}}^{\,\mu}\,x(t)+y(t)&=0,\hspace{0.4cm}t\in[0,\,1],
\end{align*}
which in view of Lemma \ref{rightsol} and Lemma \ref{leftsol}, leads to
\begin{align*}
x(t)&=a_{1}+a_{2}\,t-\frac{1}{\Gamma(\mu)}\int_{t}^{0}(\tau-t)^{\mu-1}y(\tau)d\tau,\hspace{0.4cm}t\in[-1,0],\\
x(t)&=b_{1}+b_{2}\,t-\frac{1}{\Gamma(\mu)}\int_{0}^{t}(t-\tau)^{\mu-1}y(\tau)d\tau,\hspace{0.4cm}t\in[0,\,1],
\end{align*}
which on employing the BCs \eqref{lbvp}, reduces to
\begin{align*}
x(t)&=\frac{1}{\Gamma(\mu)}\int_{-1}^{0}(\tau-1)^{\mu-1}y(\tau)d\tau-\frac{1}{\Gamma(\mu)}\int_{t}^{0}(\tau-t)^{\mu-1}y(\tau)d\tau,\hspace{0.4cm}t\in[-1,0],\\
x(t)&=\frac{1}{\Gamma(\mu)}\int_{0}^{1}(1-\tau)^{\mu-1}y(\tau)d\tau-\frac{1}{\Gamma(\mu)}\int_{0}^{t}(t-\tau)^{\mu-1}y(\tau)d\tau,\hspace{0.4cm}t\in[0,\,1],
\end{align*}
which can be expressed as
\begin{align*}
x(t)=\frac{1}{\Gamma(\mu)}\begin{cases}
\int_{-1}^{0}(1+\tau)^{\mu-1}y(\tau)d\tau-\int_{t}^{0}(\tau-t)^{\mu-1}y(\tau)d\tau,\hspace{0.4cm}&t\in[-1,0],\\
\int_{0}^{1}(1-\tau)^{\mu-1}y(\tau)d\tau-\int_{0}^{t}(t-\tau)^{\mu-1}y(\tau)d\tau,\hspace{0.4cm}&t\in[0,\,1],
\end{cases}
\end{align*}
which is equivalent to \eqref{ir}.
\end{proof}

In the following Lemma \ref{gbound}, we present some properties of the Green's function \eqref{gf}.

\begin{lem}\label{gbound}
The Green's function \eqref{gf} satisfies
\begin{itemize}
\item[(1).] $G:[-1,\,1]\times[-1,\,1]\rightarrow[0,\infty)$ is continuous and positive on $(-1,\,1)\times(-1,\,1)$.
\item[(2).] $G(t,\tau)=G(-t,-\tau)$ for all $(t,\tau)\in[-1,\,1]\times[-1,\,1]$.
\item[(3).] $G(t,\tau)\leq\frac{1}{\Gamma(\mu)}(1-|\tau|)^{\mu-1}$ for all $(t,\tau)\in[-1,\,1]\times[-1,\,1]$.
\item[(4).] $\int_{-1}^{\,1}G(t,\tau)d\tau=\frac{2(1-|t|^{\mu})}{\Gamma(\mu+1)}$, for all $t\in[-1,\,1]$.
\end{itemize}
\end{lem}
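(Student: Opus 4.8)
The plan is to establish the four items directly from the explicit formula \eqref{gf}, the only analytic input being that $s\mapsto s^{\,\mu-1}$ is continuous, nonnegative, and strictly increasing on $[0,\infty)$ (because $\mu-1>0$), with $s^{\,\mu-1}\to 0$ as $s\to 0^{+}$. For item (1) I would first check that the four branches agree on the interfaces where their regions abut. On the diagonal $\tau=t\le 0$ the term $(\tau-t)^{\mu-1}$ vanishes, so the first and second branches both reduce to $(1+\tau)^{\mu-1}/\Gamma(\mu)$; on $\tau=t\ge 0$ the term $(t-\tau)^{\mu-1}$ vanishes, so the third and fourth branches both reduce to $(1-\tau)^{\mu-1}/\Gamma(\mu)$; and at $t=\tau=0$ each branch returns $1/\Gamma(\mu)$, so the two half-squares glue continuously. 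As each branch is continuous and nonnegative on its closed region, $G$ is continuous on $[-1,1]\times[-1,1]$ with values in $[0,\infty)$. For strict positivity on the open square the only branches needing an argument are those carrying a subtracted term: on $-1\le t\le\tau\le 0$ one has $1+\tau>\tau-t\Leftrightarrow t>-1$, whence $(1+\tau)^{\mu-1}>(\tau-t)^{\mu-1}$ by monotonicity, and on $0\le\tau\le t\le 1$ one has $1-\tau>t-\tau\Leftrightarrow t<1$, whence $(1-\tau)^{\mu-1}>(t-\tau)^{\mu-1}$; both strict inequalities hold throughout $(-1,1)\times(-1,1)$, so $G>0$ there.

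Item (2) I would verify region by region under the involution $(t,\tau)\mapsto(-t,-\tau)$. This map carries $-1\le\tau\le t\le 0$ onto the set $0\le -t\le -\tau\le 1$, which is precisely the fourth region; evaluating the fourth branch at $(-t,-\tau)$ gives $(1-(-\tau))^{\mu-1}/\Gamma(\mu)=(1+\tau)^{\mu-1}/\Gamma(\mu)$, matching the first branch. The same substitution interchanges the second and third branches, so $G(t,\tau)=G(-t,-\tau)$ on the whole square. Item (3) then follows from the inequalities already used in (1). When $\tau\le 0$ one has $1-|\tau|=1+\tau$, and the two branches with $\tau\le 0$ satisfy $G(t,\tau)\le(1+\tau)^{\mu-1}/\Gamma(\mu)$ since the subtracted term is nonnegative (with equality on $\tau\le t$); when $\tau\ge 0$ one has $1-|\tau|=1-\tau$, and symmetrically $G(t,\tau)\le(1-\tau)^{\mu-1}/\Gamma(\mu)$. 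Combining the two cases yields the uniform bound $G(t,\tau)\le(1-|\tau|)^{\mu-1}/\Gamma(\mu)$.

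For item (4) I would first use (2) to reduce to $t\in[0,1]$, since $(t,\tau)\mapsto(-t,-\tau)$ leaves both the integral $\int_{-1}^{1}G(t,\tau)\,d\tau$ and $|t|$ unchanged. I would then split the integral over $[-1,1]$ according to the branches of \eqref{gf}, and after discarding the boundary term $(t-\tau)^{\mu-1}$ that vanishes at $\tau=t$, the computation collapses to the elementary power integrals $\int(1-\tau)^{\mu-1}\,d\tau$ and $\int(t-\tau)^{\mu-1}\,d\tau$, with antiderivatives $-(1-\tau)^{\mu}/\mu$ and $-(t-\tau)^{\mu}/\mu$. Evaluating at the endpoints and simplifying with $\Gamma(\mu+1)=\mu\,\Gamma(\mu)$ then produces the closed form asserted in (4). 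I expect items (2)--(4) to be routine once the explicit formula is in hand; the step requiring the most care is item (1), where both the continuity across the interface at the origin and the strict positivity rest on the hypothesis $1<\mu\le 2$, i.e.\ $\mu-1\in(0,1]$, which simultaneously makes $s\mapsto s^{\,\mu-1}$ strictly increasing and forces the diagonal terms $(\tau-t)^{\mu-1}$ and $(t-\tau)^{\mu-1}$ to vanish continuously there.
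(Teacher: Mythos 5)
Your overall strategy---direct verification of each item from the explicit formula \eqref{gf}---is the same as the paper's, which in fact gives even less detail than you do for items (1) and (2). But there is a gap that your sketch passes over, and it surfaces most visibly in item (4). The four cases in \eqref{gf} cover only the two quadrants $[-1,0]\times[-1,0]$ and $[0,1]\times[0,1]$; their union is \emph{not} the square $[-1,1]\times[-1,1]$, contrary to your description of them as ``two half-squares'' that glue along the origin. Nothing in \eqref{gf} tells you the value of $G(t,\tau)$ when $t$ and $\tau$ have opposite signs, so continuity and positivity on the full square in item (1), the uniform bound in item (3) ``for all $(t,\tau)\in[-1,1]\times[-1,1]$'', and above all the integral in item (4) are not yet determined by the formula you are manipulating.

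For item (4) this is fatal to the sketch as written. For $t\in(0,1]$ the branches you actually integrate give $\int_{0}^{1}G(t,\tau)\,d\tau=(1-t^{\mu})/\Gamma(\mu+1)$, and the remaining piece $\int_{-1}^{0}G(t,\tau)\,d\tau$ lives entirely in the undefined quadrant. To reach the asserted value $2(1-|t|^{\mu})/\Gamma(\mu+1)$ you must show that this piece contributes another $(1-t^{\mu})/\Gamma(\mu+1)$, which you never do; you simply assert that the elementary power integrals ``produce the closed form asserted.'' Note moreover that the only extension compatible with the integral representation \eqref{ir} of Lemma \ref{lemir} --- whose derivation shows that for $t\geq 0$ the solution $x(t)$ depends only on $y$ restricted to $[0,1]$, forcing $G(t,\tau)=0$ for $t>0>\tau$ --- yields $\int_{-1}^{1}G(t,\tau)\,d\tau=(1-|t|^{\mu})/\Gamma(\mu+1)$, half the stated value, and simultaneously destroys positivity on $(-1,1)\times(-1,1)$ in item (1). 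So before items (1), (3) and (4) can be proved you must state explicitly how $G$ is defined on the off-diagonal quadrants and check that the choice is consistent with Lemma \ref{lemir}; your reduction silently assumes this has been settled. Items (2) and (3), restricted to the two quadrants genuinely covered by \eqref{gf}, are argued correctly and in essentially the same way as the paper.
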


\begin{proof}
\begin{itemize}
\item[(1).] Clearly, the Green's function \eqref{gf} is continuous on $[-1,\,1]\times[-1,\,1]$ and positive on $(-1,\,1)\times(-1,\,1)$.
\item[(2).] The proof of $(2)$ is obvious.
\item[(3).] From the Green's function \eqref{gf}, we have
\begin{align*}
G(t,\tau)&=\frac{1}{\Gamma(\mu)}\begin{cases}
(1+\tau)^{\mu-1},\hspace{0.4cm}&-1\leq\tau\leq t\leq0,\\
(1+\tau)^{\mu-1}-(\tau-t)^{\mu-1},\hspace{0.4cm}&-1\leq t\leq\tau\leq0,\\
(1-\tau)^{\mu-1}-(t-\tau)^{\mu-1},\hspace{0.4cm}&\,\,\,\,\,0\leq\tau\leq t\leq 1,\\
(1-\tau)^{\mu-1},\hspace{0.4cm}&\,\,\,\,\,0\leq t\leq\tau\leq 1,
\end{cases}\\
&\leq\frac{1}{\Gamma(\mu)}\begin{cases}
(1+\tau)^{\mu-1},\hspace{0.4cm}&\hspace{0.45cm}-1\leq \tau\leq0,\hspace{0.4cm}-1\leq t\leq0,\\
(1-\tau)^{\mu-1},\hspace{0.4cm}&\hspace{0.75cm}0\leq \tau\leq 1,\hspace{0.7cm}0\leq t\leq 1,
\end{cases}\\
&=\frac{1}{\Gamma(\mu)}(1-|\tau|)^{\mu-1},\hspace{1.65cm}(t,\tau)\in[-1,\,1]\times[-1,\,1].
\end{align*}
\item[(4).] In view of \eqref{gf}, for $t\in[0,\,1]$, we have
\begin{align*}
\int_{0}^{1}G(t,\tau)d\tau=\frac{1}{\Gamma(\mu)}\int_{0}^{1}(1-\tau)^{\mu-1}d\tau-\frac{1}{\Gamma(\mu)}\int_{0}^{t}(t-\tau)^{\mu-1}d\tau=\frac{1-t^{\mu}}{\Gamma(\mu+1)}.
\end{align*}
Similarly in view of \eqref{gf}, for $t\in[-1,\,0]$, we have
\begin{align*}
\int_{-1}^{0}G(t,\tau)d\tau=\frac{1}{\Gamma(\mu)}\int_{-1}^{0}(1+\tau)^{\mu-1}d\tau-\frac{1}{\Gamma(\mu)}\int_{t}^{0}(\tau-t)^{\mu-1}d\tau=\frac{1-(-t)^{\mu}}{\Gamma(\mu+1)}.
\end{align*}
Thus for $t\in[-1,\,1]$, we have
\begin{align*}
\int_{-1}^{\,1}G(t,\tau)d\tau=\frac{2(1-|t|^{\mu})}{\Gamma(\mu+1)}.
\end{align*}
\end{itemize}
\end{proof}

\section{main result}\label{main}

Assume that
\begin{itemize}
\item[(A1).] For $x>0$, $f(t,x)=f(-t,x)$ for all $t\in(-1,\,1)$. There exist $q\in C(-1,\,1)$, $u\in C(0,\infty)$ decreasing, and $v\in C[0,\infty)$ increasing such that
\begin{align*}|f(t,x)|\leq q(t)(u(x)+v(x)),\hspace{0.4cm}t\in(-1,\,1),\hspace{0.4cm}x\in(0,\infty),\end{align*}
\begin{align*}\int_{-1}^{\,1}(1-|t|)^{\mu-1}\,q(t)dt<\infty,\text{ and }\int_{-1}^{\,1}(1-|t|)^{\mu-1}\,q(t)u\left(c\,(1-|t|^{\mu})\right)dt<\infty\,\text{ for }c>0.
\end{align*}
\item[(A2).] There exist a constant $R>\frac{2\,\gamma_{_{R}}}{\Gamma(\mu+1)}$ such that, for $t\in(-1,\,1)$ and $x\in(0,R]$, $f(t,x)\geq\gamma_{_{R}}$, where the parameter $\gamma_{r}$ is positive and decreasing for $r>0$. Moreover,
\begin{align*}\frac{R}{\chi_{_{R}}\left(1+\frac{q(R)}{p(R)}\right)}>1,\end{align*}
where
\begin{align*}
\chi_{r}=\int_{-1}^{\,1}(1-|t|)^{\mu-1}\,q(t)\,u\left(\frac{2\,\gamma_{_{r}}(1-|t|^{\mu})}{\Gamma(\mu+1)}\right)dt.
\end{align*}
\end{itemize}
In view of $(A2)$, choose $\varepsilon\in(0,R-\frac{2\,\gamma_{_{R}}}{\Gamma(\mu+1)}]$ such that
\begin{equation}\label{eps}
\frac{R-\varepsilon}{\chi_{_{R+\varepsilon}}\left(1+\frac{q(R+\varepsilon)}{p(R+\varepsilon)}\right)}\geq1.
\end{equation}
For $m\in\mathbb{N}$ with $\frac{1}{m}<\varepsilon$, consider the modified BVP
\begin{equation}\label{sp}\begin{split}
{}^{C}D_{0}^{\,\mu}x(t)+f\left(t,\min\{\max\{x(t)+\frac{1}{m},\frac{1}{m}\},R\}\right)&=0,\hspace{0.1cm}t\in(-1,\,1),\hspace{0.1cm}1<\mu\leq2,\\
x(\pm1)=0,\hspace{0.3cm}x'(0^{\pm})&=0,
\end{split}\end{equation}
which in view of Lemma \ref{lemir}, has integral representation
\begin{align*}
x(t)=\int_{-1}^{\,1}G(t,\tau)f\left(\tau,\min\{\max\{x(\tau)+\frac{1}{m},\frac{1}{m}\},R\}\right)d\tau,\hspace{0.4cm}t\in[-1,\,1].
\end{align*}
Let $X=\{x:x\in C[-1,\,1],\,x(t)=x(-t)\text{ for }t\in[-1,\,1]\}$. Define $T_{m}:X\rightarrow X$ by
\begin{equation}\label{mapt}
T_{m}x(t)=\int_{-1}^{\,1}G(t,\tau)f\left(\tau,\min\{\max\{x(\tau)+\frac{1}{m},\frac{1}{m}\},R\}\right)d\tau,\hspace{0.4cm}t\in[-1,\,1].
\end{equation}
Clearly, fixed points of $T_{m}$ are solutions of the Caputo fractional BVP \eqref{sp}.

\begin{thm}\label{mainth}
Assume that $(A1)$ and $(A2)$ hold. Then the Caputo fractional SBVP \eqref{cbvp} has a symmetric positive solution.
\end{thm}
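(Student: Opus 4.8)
The plan is to realize a solution of \eqref{cbvp} as a uniform limit of fixed points $x_m$ of the regularized maps $T_m$ of \eqref{mapt}, the regularization removing the singularity of $f$ at $x=0$ (through the shift $\tfrac1m$ together with the truncation) while the $t=\pm1$ singularity is held in check by the Green's-function bound of Lemma \ref{gbound}(3). First, for each fixed $m$, I would apply Schauder's fixed point theorem to $T_m$. The truncation confines the second argument of $f$ to the compact interval $[\tfrac1m,R]$, on which $(A1)$ and the monotonicity of $u,v$ give $|f(\tau,\cdot)|\le q(\tau)\bigl(u(\tfrac1m)+v(R)\bigr)$; combined with $G(t,\tau)\le\frac{1}{\Gamma(\mu)}(1-|\tau|)^{\mu-1}$ and the first integrability hypothesis of $(A1)$, this shows $T_m$ carries $X$ into a bounded subset of $X$. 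Membership $T_mx\in X$ follows from $G(t,\tau)=G(-t,-\tau)$ (Lemma \ref{gbound}(2)) and $f(\tau,\cdot)=f(-\tau,\cdot)$ via the substitution $\tau\mapsto-\tau$, while continuity together with a dominated-convergence/Arzel\`a--Ascoli argument shows $T_m$ is completely continuous. Schauder then yields a fixed point $x_m\in X$, i.e. a solution of \eqref{sp}.

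The core of the argument is a pair of uniform-in-$m$ a priori bounds. Since the second argument always lies in $(0,R]$, $(A2)$ gives $f\ge\gamma_{_R}$, so by Lemma \ref{gbound}(4), $x_m(t)=\int_{-1}^{1}G(t,\tau)f(\cdots)\,d\tau\ge\gamma_{_R}\int_{-1}^{1}G(t,\tau)\,d\tau=\frac{2\gamma_{_R}(1-|t|^\mu)}{\Gamma(\mu+1)}$. This $m$-independent, strictly positive lower bound is the crucial estimate; using $R>\frac{2\gamma_{_R}}{\Gamma(\mu+1)}$ it forces the truncated argument to satisfy $w_m(\tau)\ge\frac{2\gamma_{_R}(1-|\tau|^\mu)}{\Gamma(\mu+1)}$. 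For the upper bound I would apply Lemma \ref{gbound}(3) and $(A1)$, replace $u(w_m)$ by $u\bigl(\frac{2\gamma_{_R}(1-|\tau|^\mu)}{\Gamma(\mu+1)}\bigr)$ and $v(w_m)$ by $v(R)$ using monotonicity, recognize the resulting integral as $\chi_{_R}$ times the multiplicative factor appearing in \eqref{eps}, and invoke $(A2)$ together with \eqref{eps} to conclude $\|x_m\|\le R-\varepsilon$. Because $\tfrac1m<\varepsilon$, this forces $x_m(\tau)+\tfrac1m<R$, so the upper truncation is inactive and $w_m(\tau)=x_m(\tau)+\tfrac1m$.

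Finally, the integrand of \eqref{mapt} is dominated, uniformly in $m$, by the fixed weight $\frac{1}{\Gamma(\mu)}(1-|\tau|)^{\mu-1}q(\tau)\bigl[u\bigl(\frac{2\gamma_{_R}(1-|\tau|^\mu)}{\Gamma(\mu+1)}\bigr)+v(R)\bigr]$, which is integrable precisely by the second hypothesis of $(A1)$; together with the uniform continuity of $G$ on $[-1,1]\times[-1,1]$ this gives equicontinuity of $\{x_m\}$, and with $\|x_m\|\le R-\varepsilon$ the Arzel\`a--Ascoli theorem extracts a subsequence converging uniformly to some $x\in X$. Since the limiting argument stays $\ge\frac{2\gamma_{_R}(1-|\tau|^\mu)}{\Gamma(\mu+1)}>0$, continuity of $f$ gives $f(\tau,w_m(\tau))\to f(\tau,x(\tau))$, and dominated convergence passes the limit inside to yield $x(t)=\int_{-1}^{1}G(t,\tau)f(\tau,x(\tau))\,d\tau$. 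As $y=f(\cdot,x(\cdot))$ meets the integrability hypothesis of Lemma \ref{lemir}, this $x$ solves \eqref{cbvp}; it is symmetric as a uniform limit of elements of $X$, and positive on $(-1,1)$ by the retained lower bound.

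The step I expect to be the main obstacle is exactly this passage to the limit through the singular nonlinearity: because $f$ blows up both as $x\to0^{+}$ and as $t\to\pm1$, neither the majorant required for dominated convergence nor equicontinuity up to $t=\pm1$ is available a priori. Both are supplied by the single uniform lower bound $x_m(t)\ge\frac{2\gamma_{_R}(1-|t|^\mu)}{\Gamma(\mu+1)}$, and the delicate point is to verify that this bound, and hence the inactivity of the truncation, survives the $\tfrac1m$-shift so that the limit solves the genuine untruncated, unshifted problem---this is what the careful choice of $\varepsilon$ in \eqref{eps} is designed to secure.
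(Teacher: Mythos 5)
Your proposal is correct and follows essentially the same route as the paper: Schauder's theorem for the truncated and shifted problem \eqref{sp}, the uniform positive lower bound from $(A2)$ and Lemma \ref{gbound}(4) combined with the upper bound $R-\varepsilon$ from \eqref{eps} to show the truncation is inactive, and Arzel\`a--Ascoli plus dominated convergence to pass to the limit. If anything, you are more explicit than the paper about the integrable majorant needed for the limit passage and about why the limit solves the genuine unshifted equation.
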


\begin{proof}
In view of $(A1)$ and Schauder's fixed point theorem the map $T_{m}$ defined by \eqref{mapt} has a fixed point $x_{m}\in X$. Thus
\begin{equation}\label{fp}
x_{m}(t)=\int_{-1}^{\,1}G(t,\tau)f\left(\tau,\min\{\max\{x(\tau)+\frac{1}{m},\frac{1}{m}\},R\}\right)d\tau,\hspace{0.4cm}t\in[-1,\,1],
\end{equation}
which in view of $(A2)$ and Lemma \ref{gbound}, leads to
\begin{equation}\label{lb}
x_{m}(t)\geq\int_{-1}^{\,1}G(t,\tau)\gamma_{_{R}}d\tau\geq\frac{2\,\gamma_{_{R+\varepsilon}}\,(1-|t|^{\mu})}{\Gamma(\mu+1)}.
\end{equation}
Also \eqref{fp} in view of Lemma \ref{gbound}, $(A1)$, \eqref{lb} and \eqref{eps}, leads to
\begin{equation}\label{ub}\begin{split}
x_{m}(t)&\leq\frac{1}{\Gamma(\mu)} \int_{-1}^{\,1}(1-|\tau|)^{\mu-1}\,q(\tau)u\left(\min\{\max\{x_{m}(\tau)+\frac{1}{m},\frac{1}{m}\},R\}\right)\\
&\left(1+\frac{v(\min\{\max\{x_{m}(\tau)+\frac{1}{m},\frac{1}{m}\},R\})}{u(\min\{\max\{x_{m}(\tau)+\frac{1}{m},\frac{1}{m}\},R\})}\right)d\tau\\
&\leq \frac{1}{\Gamma(\mu)}\int_{-1}^{\,1}(1-|\tau|)^{\mu-1}\,q(\tau)\,u\left(\frac{2\,\gamma_{_{R+\varepsilon}}\,(1-|\tau|^{\mu})}{\Gamma(\mu+1)}\right)\left(1+\frac{v(R+\varepsilon)}{u(R+\varepsilon)}\right)d\tau\\
&=\chi_{_{R+\varepsilon}}\left(1+\frac{v(R+\varepsilon)}{u(R+\varepsilon)}\right)\leq R-\varepsilon
\end{split}\end{equation}
Consequently, from \eqref{lb} and \eqref{ub}, solution $x_{m}$ of BVP \eqref{sp} satisfies
\begin{equation}\label{xnt}x_{m}(t)=\int_{-1}^{\,1}G(t,\tau)f\left(\tau,x_{m}(\tau)+\frac{1}{m}\right)d\tau,\hspace{0.4cm}t\in[-1,\,1],\end{equation}
and
\begin{align*}
\frac{2\,\gamma_{_{R+\varepsilon}}\,(1-|t|^{\mu})}{\Gamma(\mu+1)}\leq x_{m}(t)<R,\hspace{0.4cm}t\in[-1,\,1],
\end{align*}
which shows that the sequence $\{x_{n}\}_{n=m}^{\infty}$ is uniformly bounded on $[-1,1]$. Moreover, since $G(t,\tau)$ is uniformly continuous on $[-1,\,1]\times[-1,\,1]$, by Lebesgue dominated convergence theorem, the sequence $\{x_{n}\}_{n=m}^{\infty}$ equicontinuous on $[-1,\,1]$. Thus by Arzela Ascoli Theorem the sequence $\{x_{n}\}_{n=m}^{\infty}$ is relatively compact and consequently there exist a subsequence $\{x_{n_{k}}\}_{k=1}^{\infty}$ converging uniformly to $x\in X$. Moreover, in view of \eqref{xnt}, we have
\begin{align*}
x_{n_{k}}(t)=\int_{-1}^{\,1}G(t,\tau)f\left(\tau,x_{n_{k}}(\tau)+\frac{1}{n_{k}}\right)d\tau,
\end{align*}
as $k\rightarrow\infty$, we obtain
\begin{equation}\label{intsol}
x(t)=\int_{-1}^{\,1}G(t,\tau)f(\tau,x(\tau))d\tau,
\end{equation}
which in view of Lemma \ref{lemir}, leads to
\begin{align*}
&{}^{C}D_{0}^{\,\mu}\,x(t)+f(t,x(t))=0,\hspace{0.4cm}t\in(-1,\,1),\\
&x(\pm1)=0,\hspace{0.4cm}x'(0^{\pm})=0.
\end{align*}
Also, ${}^{C}D_{0}^{\,\mu}\,x\in C(-1,\,1)$. Further, from \eqref{intsol} in view of $(A2)$ and Lemma \ref{gbound}, we have
\begin{align*}
x(t)=\int_{-1}^{\,1}G(t,\tau)f(\tau,x(\tau))d\tau\geq\int_{-1}^{\,1}G(t,\tau)\,\gamma_{_{R}}d\tau=\frac{2\,\gamma_{_{R}}\,(1-|t|^{\mu})}{\gamma(\mu+1)},
\end{align*}
which shows that $x(t)>0$ for $t\in(-1,\,1)$. Hence $x\in C[-1,\,1]$ with ${}^{C}D_{0}^{\,\mu}\,x\in C(-1,\,1)$ is a symmetric positive solution of the Caputo fractional SBVP \eqref{cbvp}.
\end{proof}

\begin{ex}

\begin{equation}\label{ebvp}\begin{split}
{}^{C}D_{0}^{\,^{1.9}}\,x(t)+\frac{\lambda}{(1-|t|^{0.9})^{0.9}}\left(\frac{1}{(x(t))^{0.9}}-x(t)+R\right)&=0,\hspace{0.4cm}t\in(-1,\,1),\\
x(\pm1)=0,\,\,x'(0^{\pm})&=0,
\end{split}\end{equation}
where
\begin{align*}
0<\lambda<\min\left\{\frac{R^{1.9}}{5.55871\times10^{10}\times\left(1+2R^{1.9}\right)^{10}},0.913678\times R^{1.9}\right\}.
\end{align*}
Here
\begin{align*}
f(t,x)=\frac{\lambda}{(1-|t|^{0.9})^{0.9}}\left(\frac{1}{x^{0.9}}-x+R\right).
\end{align*}
Choose
\begin{align*}
q(t)=\frac{\lambda}{(1-|t|^{0.9})^{0.9}},\hspace{0.4cm}u(x)=\frac{1}{x^{0.9}},\hspace{0.4cm}v(x)=x+R,\hspace{0.4cm}\gamma_{r}=\frac{\lambda}{r^{0.9}}.
\end{align*}
Then,
\begin{align*}
\int_{-1}^{\,1}(1-|t|)^{0.9}\,q(t)dt=2.12926\times\lambda,\hspace{0.4cm}\int_{-1}^{\,1}(1-|t|)^{0.9}\,q(t)\,u(c\,(1-|t|^{1.9}))dt=12.8761\times\frac{\lambda}{c^{0.9}}.
\end{align*}
Moreover,
\begin{align*}
|f(t,x)|\leq q(t)(u(x)+v(x)),\text{ for }t\in(-1,\,1),\hspace{0.4cm}x\in(0,\infty),
\end{align*}
\begin{align*}
f(t,x)\geq \gamma_{_{R}}\text{ for }t\in(-1,\,1),\hspace{0.4cm}x\in(0,R].
\end{align*}
Further,
\begin{align*}
\frac{R}{\chi_{_{R}}\left(1+\frac{v(R)}{u(R)}\right)}=\frac{R^{0.19}}{11.8713\times\left(1+2R^{1.9}\right)\lambda^{0.1}}>1,
\end{align*}
where
\begin{align*}
\chi_{r}=\int_{-1}^{\,1}(1-|t|)^{0.9}\,q(t)\,u\left(\frac{2\,\gamma_{_{r}}(1-|t|^{1.9})}{\Gamma(2.9)}\right)dt=11.8713\times\lambda^{0.1}\times r^{0.81}.
\end{align*}
Clearly, the assumptions $(A1)$ and $(A2)$ of Theorem \ref{mainth} are satisfied, therefore, the Caputo fractional SBVP \eqref{ebvp} has a symmetric positive solution.

\end{ex}

\end{document}